\newtheorem{corollary}{Corollary}
\newtheorem{lemma}{Lemma}
\newtheorem{proposition}{Proposition}
\newtheorem{theorem}{Theorem}
\theoremstyle{definition}
\newtheorem{definition}{Definition}
\newtheorem{example}{Example}
\theoremstyle{remark}
\begin{document}

\title{Ideals in hom-associative Weyl algebras}
\author{Per B\"ack}
\address[Per B\"ack]{Division of Mathematics and Physics, M\"alar\-dalen  University,  Box  883,  SE-721  23  V\"aster\r{a}s, Sweden}
\email[corresponding author]{per.back@mdu.se}

\author{Johan Richter}
\address[Johan Richter]{Department of Mathematics and Natural Sciences, Blekinge Institute of Technology, SE-371 79 Karlskrona, Sweden}
\email{johan.richter@bth.se}

\subjclass[2020]{17B61, 17D30}
\keywords{hom-associative Weyl algebras, hom-associative Ore extensions, Stafford's theorem}

\begin{abstract}
We introduce hom-associative versions of the higher order Weyl algebras, generalizing the construction of the first hom-associative Weyl algebras. We then show that the higher order hom-associative Weyl algebras are simple, and that all their one-sided ideals are principal. 
\end{abstract}

\maketitle

\section{Introduction}
Dixmier~\cite{Dix70} has shown that every left (right) ideal of the first Weyl algebra $A_1$ over a field $K$ of characteristic zero can be generated by two elements. Later, and more generally, Stafford~\cite{Sta76} has shown that every left (right) ideal of a simple left (right) Noetherian ring with Krull dimension $n$ can be generated by $n+1$ elements; in particular, this result applies to the $n$th Weyl algebra $A_n$ over $K$. Stafford~\cite{Sta78} has further improved this result for $A_n$ and shown that every left (right) ideal of $A_n$ over $K$ can be generated by two elements, a classical result today more commonly known as \emph{Stafford’s theorem}. 

In this article, we introduce \emph{higher order hom-associative Weyl algebras} as hom-associative deformations of the higher order Weyl algebras over $K$ and consider what a hom-associative version of Stafford's theorem would look like. We prove that, subject to a non-triviality condition on the deformation, the higher order hom-associative Weyl algebras are simple (\autoref{cor_simple}) and that all their one-sided ideals are principal (\autoref{thm:principal}). 

\section{Preliminaries}\label{sec:prel}
Throughout this article, we denote by $\mathbb{N}$ the set of non-negative integers. By a \emph{non-as\-so\-cia\-tive algebra} over an associative, commutative, and unital ring $R$, we mean an $R$-algebra $A$ which is not necessarily associative and not necessarily unital. 

\subsection{Hom-associative algebras}\label{subsec:hom}\emph{Hom-associative algebras} were introduced in \cite{MS08} as non-associative algebras with a ``twisted'' associativity condition. In particular, by using the commutator as a bracket, any hom-associative algebra gives rise to a \emph{hom-Lie algebra}; the latter introduced in \cite{HLS03} as a  generalization of a Lie algebra, now with a twisted Jacobi identity.

\begin{definition}[Hom-associative algebra]
 A \emph{hom-associative algebra} over an associative, commutative, and unital ring $R$, is a non-associative $R$-algebra $A$ with an $R$-linear map $\alpha$, where for all $a,b,c\in A$, the \emph{hom-associative condition} holds,
 \begin{equation*}
    \alpha(a)(bc)=(ab)\alpha(c).
 \end{equation*}
 \end{definition}
 Since $\alpha$ in the above definition ``twists'' the associativity condition, it is referred to as a \emph{twisting map}.

For hom-associative algebras it is usually too restrictive to expect them to be unital. Instead, a related condition, called \emph{weak unitality}, is of interest. 

\begin{definition}[Weak unitality]\label{def:weak-hom}
Let $A$ be a hom-associative algebra. If for all $a\in A$, $ea=ae=\alpha(a)$ for some $e\in A$, we say that $A$ is \emph{weakly unital} with \emph{weak identity element} $e$. 
\end{definition}

The so-called \emph{Yau twist} gives a way of constructing (weakly unital) hom-as\-so\-cia\-tive algebras from (unital) associative algebras.
 
\begin{proposition}[\cite{FG09, Yau09}]\label{prop:star-alpha-mult} Let $A$ be an associative algebra and let $\alpha$ be an algebra endomorphism on $A$. Define a new product $*$ on $A$ by $a*b\colonequals\alpha(ab)$ for any $a,b\in A$. Then $A$ with product $*$, called the \emph{Yau twist} of $A$, is a hom-associative algebra with twisting map $\alpha$. If $A$ is unital with identity element $1_A$, then the Yau twist of $A$ is weakly unital with weak identity element $1_A$.
\end{proposition}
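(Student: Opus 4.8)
The plan is to verify the two claims in the proposition directly from the definitions. I would first check the hom-associative condition for the Yau-twisted product $*$, and then separately verify weak unitality when $A$ is unital. The entire proof should reduce to unwinding the definition $a*b = \alpha(ab)$ and using that $\alpha$ is an algebra endomorphism, so the calculations are short.

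For the hom-associative condition, I would compute both sides of $\alpha(a)*(b*c) = (a*b)*\alpha(c)$ and show they agree. Expanding the left-hand side gives $\alpha(a)*(b*c) = \alpha(a)*\alpha(bc) = \alpha\bigl(\alpha(a)\alpha(bc)\bigr)$, and since $\alpha$ is multiplicative this equals $\alpha\bigl(\alpha(a(bc))\bigr) = \alpha^2\bigl(a(bc)\bigr)$. Symmetrically, the right-hand side is $(a*b)*\alpha(c) = \alpha(ab)*\alpha(c) = \alpha\bigl(\alpha(ab)\alpha(c)\bigr) = \alpha^2\bigl((ab)c\bigr)$. Here the key point is that the underlying product on $A$ is associative, so $a(bc) = (ab)c$, and applying $\alpha^2$ to both sides yields equality. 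I would also remark that $\alpha$ is a twisting map for the new product $*$, meaning $\alpha(a*b) = \alpha(a)*\alpha(b)$, which follows again from multiplicativity of $\alpha$; this is what makes the pair $(A,*,\alpha)$ a genuine hom-associative algebra rather than merely an algebra satisfying the displayed identity.

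For the weak unitality claim, assuming $A$ has identity element $1_A$, I would verify that $1_A * a = a * 1_A = \alpha(a)$ for all $a \in A$. Indeed, $1_A * a = \alpha(1_A \cdot a) = \alpha(a)$ and likewise $a * 1_A = \alpha(a \cdot 1_A) = \alpha(a)$, so $1_A$ serves as a weak identity element in the sense of \autoref{def:weak-hom}.

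I do not anticipate a genuine obstacle here, since the statement is the foundational Yau-twist construction and the verification is purely formal. The only point requiring mild care is to keep straight where associativity of the original product is invoked (precisely once, to identify $a(bc)$ with $(ab)c$ before applying $\alpha^2$), and to ensure $R$-linearity of $*$ is inherited from bilinearity of the original product together with $R$-linearity of $\alpha$, so that $A$ with product $*$ is indeed a non-associative $R$-algebra to begin with.
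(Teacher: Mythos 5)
Your verification is correct: the paper itself gives no proof of this proposition (it is quoted from the cited references), and your direct computation --- reducing both sides of the hom-associative condition to $\alpha^2(a(bc))=\alpha^2((ab)c)$ via multiplicativity of $\alpha$ and associativity of the original product, plus the one-line check that $1_A*a=a*1_A=\alpha(a)$ --- is exactly the standard argument. Your added remarks on $R$-linearity of $*$ and on $\alpha$ being multiplicative for $*$ are correct and harmless.
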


By a left (right) \emph{hom-ideal} in a hom-associative algebra, we mean a left (right) ideal that is also invariant under the twisting map. If the algebra is weakly unital, then all ideals, one-sided and two-sided, are automatically invariant under the twisting map. 

\subsection{The $n$th  Weyl algebra}
The \emph{$n$th Weyl algebra}, $A_n$, over a field $K$ of characteristic zero is the free, associative, and unital algebra with generators $x_1, x_2, \ldots , x_n$ and $y_1, y_2, \ldots , y_n$, $K\langle x_1,x_2, \ldots, x_n, y_1, y_2,\ldots, y_n\rangle$, modulo the commutation relations 
\begin{align*}
    x_ix_j&=x_jx_i \text{ for all } i,j \in \{1,2, \ldots, n\},\\
    y_iy_j&=y_jy_i \text{ for all } i,j \in \{1,2, \ldots, n\},\\
    x_iy_j&=y_jx_i \text{ for all } i,j \in \{1,2, \ldots, n\}\text{ such that }i\neq j,\\
    x_iy_i&=y_ix_i+1\text{ for all } i \in \{1,2, \ldots, n\}. 
\end{align*}

\subsection{The first hom-associative Weyl algebras}
In \cite{BRS18}, a family of \emph{hom-as\-so\-cia\-tive Weyl algebras} $\{A_1^k\}_{k\in K}$ was constructed as a generalization of $A_1$ to the hom-associative setting (see also \cite{BR22} for the case when $K$ has prime characteristic), including $A_1$ as the member corresponding to $k=0$. The definition of $A_1^k$ is as follows:
\begin{definition}[The first hom-associative Weyl algebra]
Let $\alpha_k$ be the $K$-auto\-mor\-phism on $A_1$ defined by $\alpha_k(x)\colonequals x$, $\alpha_k(y)\colonequals y+k$, and $\alpha_k(1_{A_1})\colonequals 1_{A_1}$ for any $k\in K$. The \emph{first hom-associative Weyl algebra} $A_1^k$ is the Yau twist of $A_1$ by $\alpha_k$. 
\end{definition}
For each $k\in K$, we thus get a hom-associative Weyl algebra $A_1^k$ which is weakly unital with weak identity element $1_{A_1}$. In \cite{BRS18}, it was proven that $A_1^k$ is simple for all $k\in K$. In \cite{BR20}, the study of $A_1^k$ was continued. The morphisms and derivations on $A_1^k$ were characterized, and an analogue of the famous \emph{Dixmier conjecture}, first introduced by Dixmer \cite{Dix68}, was proven. It was also shown that $A_1^k$ is a formal deformation of $A_1$ with $k$ as deformation parameter, this in contrast to the associative setting where $A_1$ is formally rigid and thus cannot be formally deformed. 

\subsection{Monomial orderings}
We introduce an ordering, the so-called \emph{graded lexicographic ordering} on $\mathbb{N}^n$, where a vector is larger than another vector if it has larger sum of all its elements. In case of a tie, we apply \emph{lexicographic ordering}, that is, $(1,0,0\ldots,0)>(0,1,0,\ldots,0)>\cdots >(0,0,0,\ldots,1)$. For example, $(0,0,3)>(1,1,0)>(0,2,0)>(0,1,0)>(0,0,0)$. Note that this is a total ordering on $\mathbb{N}^n$ and that any subset has a smallest element. Furthermore, it is impossible to find an infinite decreasing sequence in $\mathbb{N}^n$. Note that this gives an ordering of the monomials in $K[y_1,y_2,\ldots,y_n]$.

Any $p\in A_n$ can be written as $\sum_{l\in\mathbb{N}^n}p_lx_1^{l_1}x_2^{l_2}\cdots x_n^{l_n}$ where $p_l\in K[y_1,y_2,\ldots,y_n]$,  $l=(l_1,l_2,\ldots,l_n)\in\mathbb{N}^n$, and only finitely many of the $p_l$ are non-zero. We define $\deg_x(p)$ as the largest $l$ in graded lexicographic order such that $p_l$ is non-zero, and $L(p)=p_{\deg_x(p)}$. We also define $\deg_y$ in a similar way. We will often write $\deg_y(p)=0$, where $0$ should be understood as the zero vector of appropriate dimension. 

\section{Ideals in higher order hom-associative Weyl algebras}
We define the \emph{$n$th hom-associative Weyl algebra} in anaology with how the first hom-associative Weyl algebra is defined. 
\begin{definition}[The $n$th hom-associative Weyl algebra] Let $K$ be a field of characteristic zero and let $k=(k_1,k_2,\ldots,k_n)\in K^n$. Define the $K$-automorphism $\alpha_k$ on $A_n$ by $\alpha_k(x_i)\colonequals x_i$, $\alpha_k(y_i)\colonequals y_i+k_i$, and $\alpha_k(1_{A_n})\colonequals 1_{A_n}$ for $1\leq i\leq n$. The \emph{$n$th hom-associative Weyl algebra} $A_n^k$ is the Yau twist of $A_n$ by $\alpha_k$.
\end{definition}

\begin{proposition}\label{prop_hom-ideals-from-higher-Weylalgebra}
If $I$ is a left (right) ideal of $A_n$, then $I$ is a left (right) ideal of $A_n^k$ if and only if $\alpha_k(I)\subseteq I$.	
\end{proposition}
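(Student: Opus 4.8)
The plan is to prove both directions by directly comparing the two algebra structures on the same underlying set $A_n$. The key observation is that the Yau twist $A_n^k$ has the same underlying $K$-module as $A_n$; only the multiplication differs, with $a * b = \alpha_k(ab)$. Since ideals in both settings are $K$-submodules, the real content is understanding how the absorption property $a * p \in I$ (respectively $p * a \in I$) for the twisted product relates to the absorption property $ap \in I$ for the original product.

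First I would fix a $K$-submodule $I$ of $A_n$ that is already a left ideal of $A_n$, and compute, for arbitrary $a \in A_n^k$ and $p \in I$, the twisted product $a * p = \alpha_k(ap)$. For the ``if'' direction, assuming $\alpha_k(I) \subseteq I$, I would note that $ap \in I$ because $I$ is a left ideal of $A_n$, and then $\alpha_k(ap) \in \alpha_k(I) \subseteq I$, so $a * p \in I$; thus $I$ absorbs left multiplication in $A_n^k$. The right-ideal case is symmetric. I would also need to confirm that $I$ is closed under the twisting map, i.e. that it is genuinely a hom-ideal, but this is exactly the hypothesis $\alpha_k(I) \subseteq I$. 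For the ``only if'' direction, I would suppose $I$ is a left ideal of $A_n^k$ and try to recover $\alpha_k(I) \subseteq I$. Here I would exploit the weak unitality from \autoref{prop:star-alpha-mult}: for each $p \in I$ we have $1_{A_n} * p = \alpha_k(1_{A_n} \cdot p) = \alpha_k(p)$, and since $I$ is a left ideal of $A_n^k$, the element $1_{A_n} * p$ lies in $I$, giving $\alpha_k(p) \in I$ as desired.

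The main subtlety, which I would want to check carefully rather than the routine algebra, is the ``only if'' direction's claim that $I$ remains a left ideal of the \emph{associative} algebra $A_n$ once we know it is a left ideal of the twisted algebra. Given $a \in A_n$ and $p \in I$, we know $\alpha_k(ap) = a * p \in I$, but we want $ap \in I$ itself. Since $\alpha_k$ is a $K$-automorphism of $A_n$, I can write $ap = \alpha_k(\alpha_k^{-1}(a) \cdot p)$; noting that $\alpha_k^{-1}(a) * p = \alpha_k(\alpha_k^{-1}(a) p)$ lies in $I$ by the twisted-ideal assumption, and that $\alpha_k(I) \subseteq I$ together with injectivity of $\alpha_k$ on the finitely-generated pieces forces $\alpha_k(I) = I$, I can conclude $ap \in I$. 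This interplay---using that $\alpha_k$ restricts to a bijection on $I$---is the step I expect to require the most care, since $\alpha_k(I) \subseteq I$ alone does not immediately give surjectivity; I would argue that $\alpha_k$ preserves the relevant degree filtration (as $\alpha_k(y_i) = y_i + k_i$ and $\alpha_k(x_i) = x_i$ leave $\deg_x$ and $\deg_y$ unchanged), so $\alpha_k$ is a bijection on each finite-dimensional graded piece and hence $\alpha_k(I) = I$. Once that is settled, both inclusions follow cleanly.
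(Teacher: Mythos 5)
Your first two paragraphs constitute a complete and correct proof of the proposition, and they match the paper's argument exactly: for the ``if'' direction, $a*p=\alpha_k(ap)\in\alpha_k(I)\subseteq I$ since $ap\in I$ already, and for the ``only if'' direction, $\alpha_k(p)=1_{A_n}*p\in I$. The third paragraph, however, addresses a non-issue: the proposition takes ``$I$ is a left ideal of $A_n$'' as a standing hypothesis on both sides of the equivalence, so you never need to deduce absorption in $A_n$ from absorption in $A_n^k$ --- that deduction is the content of the later \autoref{prop_idealsOfA_n^k}, proved there under the extra hypothesis $k_1k_2\cdots k_n\neq0$. Within that superfluous paragraph there is also an algebraic slip: $\alpha_k(\alpha_k^{-1}(a)\cdot p)$ equals $a\,\alpha_k(p)$, not $ap$; the identity you want is $ap=\alpha_k\bigl(\alpha_k^{-1}(a)\,\alpha_k^{-1}(p)\bigr)=\alpha_k^{-1}(a)*\alpha_k^{-1}(p)$, which is exactly why surjectivity of $\alpha_k$ onto $I$ would be needed for that (unnecessary) claim. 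Your filtration argument that an $\alpha_k$-invariant subspace satisfies $\alpha_k(I)=I$ (finite-dimensional filtered pieces plus injectivity) is sound and is a nice alternative to the paper's \autoref{lem_higheridealInvariants}, but none of it is required for the statement at hand.
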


\begin{proof}
We show the left case; the right case is similar. To this end, let $I$ be a left ideal of $A_n$. If $\alpha_k(I)\subseteq I$, $p\in A_n$, and $q\in I$, then $p*q=\alpha_k(pq)\in\alpha_k(I)\subseteq I$.  

If $I$ is a left ideal of $A_n^k$ and $q\in I$, then $\alpha_k(q)=1_{A_n}*q\in I$, so $\alpha_k(I)\subseteq I$.
\end{proof}

\begin{example}
Let $I$ be the left ideal of $A_1$ generated by $x^n$ for some $n\in\mathbb{N}_{>0}$. Then $I$ is a non-trivial left ideal (for example, $y\not\in I$). Any element in $I$ may be written as $px^n$ for some $p\in A_1$. We have $\alpha_k(px^n)=\alpha_k(p)\alpha_k(x^n)=\alpha_k(p)x^n\in I$, so $\alpha_k(I)\subseteq I$. Similarly, if $I$ is the right ideal of $A_1$ generated by $x^n$, then $I$ is a non-trivial right ideal such that $\alpha_k(I)\subseteq I$. By \autoref{prop_hom-ideals-from-higher-Weylalgebra}, $I$ is a non-trivial left (right) ideal of $A_1^k$.  
\end{example}

By the next example, not all left (right) ideals of $A_1$ are left (right) ideals of $A_1^k$ when $k\neq0$.

\begin{example}
Let $I$ be the left (right) ideal of $A_1$ generated by $y$. Then $x\not\in I$, so $I\neq A_1$. Assume that $k\neq0$ and $\alpha_k(I)\subseteq I$. Then $y+k=\alpha_k(y)\in I$, so $k=(y+k)-y \in I$. Hence $1_{A_1}\in I$, which implies $I=A_1$; a contradiction. By \autoref{prop_hom-ideals-from-higher-Weylalgebra}, $I$ is not a left (right) ideal of $A_1^k$.
\end{example}

\begin{lemma}\label{lem_higheridealInvariants}
If $I$ is a left (right) ideal of $A_n^k$ where $k_1k_2\cdots k_n\neq0$, then $\alpha_k(I)=I$.
\end{lemma}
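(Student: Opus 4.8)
The statement is an equality of sets, so I would prove the two inclusions separately. The inclusion $\alpha_k(I)\subseteq I$ comes essentially for free: by \autoref{prop:star-alpha-mult} the algebra $A_n^k$ is the Yau twist of the unital algebra $A_n$ and hence weakly unital with weak identity $1_{A_n}$, so every one-sided ideal is automatically invariant under the twisting map; explicitly, for $q\in I$ we have $\alpha_k(q)=1_{A_n}*q\in I$ in the left case, and symmetrically on the right. All the content therefore lies in the reverse inclusion $I\subseteq\alpha_k(I)$, that is, in showing that $\alpha_k^{-1}(q)\in I$ for every $q\in I$.

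For the reverse inclusion I would exploit that $\alpha_k$ acts on $A_n$ as a translation of the $y$-variables. Writing an element as $q=\sum_l p_l\,x_1^{l_1}\cdots x_n^{l_n}$ with $p_l\in K[y_1,\ldots,y_n]$, the automorphism fixes each $x_i$ and sends $y_i\mapsto y_i+k_i$, so $\alpha_k(q)=\sum_l p_l(y_1+k_1,\ldots,y_n+k_n)\,x_1^{l_1}\cdots x_n^{l_n}$. The crucial point is that the difference operator $\Delta\colonequals\alpha_k-\id$ strictly lowers the largest total $y$-degree occurring among the coefficients: a translation does not alter the top-degree homogeneous part of a polynomial, so $p_l(y+k)-p_l(y)$ has strictly smaller total degree than $p_l$. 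Hence, if $d$ denotes the maximal total $y$-degree among the $p_l$, then $\Delta^{d+1}q=0$.

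Finally I would convert this nilpotence into the desired membership. Expanding $(\alpha_k-\id)^{d+1}=\sum_{j=0}^{d+1}\binom{d+1}{j}(-1)^{d+1-j}\alpha_k^{\,j}$ and applying it to $q$ yields a relation in which the $j=0$ term $q$ carries the nonzero scalar $(-1)^{d+1}$; solving for $q$ then expresses it as a $K$-linear combination of $\alpha_k(q),\alpha_k^2(q),\ldots,\alpha_k^{d+1}(q)$. Since $\alpha_k(I)\subseteq I$, iteration gives $\alpha_k^{\,j-1}(q)\in I$, whence $\alpha_k^{\,j}(q)=\alpha_k(\alpha_k^{\,j-1}(q))\in\alpha_k(I)$ for every $j\geq1$; as $\alpha_k(I)$ is a $K$-subspace, we conclude $q\in\alpha_k(I)$, and thus $I\subseteq\alpha_k(I)$. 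The one genuinely substantive step, and the one I would argue most carefully, is the degree-lowering property of $\Delta$ that yields $\Delta^{d+1}q=0$; the remainder is bookkeeping. I would also remark that this argument never uses the assumption $k_1k_2\cdots k_n\neq0$, since $\Delta$ lowers degree for every $k$; that nonvanishing condition is the natural non-triviality hypothesis for the deformation but is not required for the present conclusion.
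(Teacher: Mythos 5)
Your proof is correct, and it takes a genuinely different route from the paper's. The easy inclusion $\alpha_k(I)\subseteq I$ is handled identically, via weak unitality. For $I\subseteq\alpha_k(I)$ the paper argues by contradiction on a counterexample $q$ of minimal $\deg_x$: it first produces, by repeatedly replacing an element $p$ with $\alpha_k(p)-p$, an auxiliary $q'\in I$ with $\deg_x(q')=\deg_x(q)$ and $L(q')=1$, then forms $r=\alpha_k\bigl(\alpha_k^{-2}(L(q))*q'\bigr)\in\alpha_k(I)$ with the same leading term as $q$, so that $q-r$ contradicts minimality. You instead observe that $\Delta=\alpha_k-\id$ strictly lowers the total $y$-degree of the coefficients and is therefore locally nilpotent, and that expanding $\Delta^{d+1}q=0$ writes $q$ as a $K$-linear combination of $\alpha_k(q),\ldots,\alpha_k^{d+1}(q)$, all of which lie in the $K$-subspace $\alpha_k(I)$ because $\alpha_k(I)\subseteq I$ can be iterated. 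Your argument is shorter and more robust: it needs no monomial ordering on the $x$-part, no minimal counterexample, and, as you correctly note, no hypothesis on $k$, so it proves the statement for every $k\in K^n$. It also sidesteps a delicate point in the paper's reduction step: for $n\geq 2$ there exist non-constant polynomials in $K[y_1,\ldots,y_n]$ fixed by $\alpha_k$ even when all $k_i\neq 0$ (e.g.\ $k_2y_1-k_1y_2$), and for such a leading coefficient $\alpha_k(p)-p$ drops in $x$-degree, so the paper's claim that $\deg_x(\alpha_k(p)-p)=\deg_x(p)$ needs extra justification; your degree count in $y$ is immune to this. The one step you should write out in full is the one you flag yourself, namely that translation preserves the top homogeneous component of each coefficient $p_l$, so that $\deg\bigl(p_l(y+k)-p_l(y)\bigr)<\deg(p_l)$; with that in place the proof is complete.
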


\begin{proof}
Since any left (right) ideal $I$ of $A_n^k$ is also a left (right) hom-ideal,  $\alpha_k(I) \subseteq I$.

Now, let $I$ be a left ideal of $A_n^k$. If $0 \neq p \in I$, we claim that we can find an element $p' \in I$ such that $\deg_x(p')=\deg_x(p)$ and $L(p')=1$. If $L(p)=c\in K$, we can take $p'= c^{-1}*p$. Otherwise, we note that $\deg_x(\alpha_k(p)-p) = \deg_x(p)$, and that $L(\alpha_k(p))$ and $L(p)$ have the same leading term using our monomial ordering on $K[y_1,y_2,\ldots,y_n]$. Thus, $L(\alpha_k(p)-p)$ has lower degree than $L(p)$ using our monomial ordering. We can repeat this process until we get an element in $I$ with a constant as leading coefficient w.r.t. $x_1,x_2,\ldots, x_n$, and with the same degree in $x_1,x_2,\ldots,x_n$ as $p$. 

Now suppose $I\not\subseteq \alpha_k(I)$. Then there is at least one element in $I$ that does not belong to $\alpha_k(I)$. Pick such an element, $q$, of lowest possible degree w.r.t. $x_1,x_2,\ldots,x_n$. Find an element $q'\in I$ such that $\deg_x(q')=\deg_x(q)$ and $L(q')=1$. Set $r\colonequals\alpha_k^2(\alpha_k^{-2}(L(q))q')= \alpha_k(\alpha_k^{-2}(L(q))*q')$. Note that $\deg_x(r)= \deg_x(q)$ and that $L(r)=L(q)$. Since $\alpha_k^{-2}(L(q))*q' \in I$, we have $r\in \alpha_k(I)\subseteq I$. Hence $q-r\in I$, and by the minimality of $q$, we must have $q-r \in \alpha_k(I)$. However, this would imply that also $q \in \alpha_k(I)$, which is a contradiction. 

Now let $I$ be a right ideal of $A_n^k$. If $0\neq p \in I$, we can find an element $p' \in I$ such that $\deg_x(p')=\deg_x(p)$ and $L(p')=1$. If $L(p)=c\in K$, we can take $p'=p* c^{-1}$. Otherwise we proceed like in the left case.  

Now suppose $I\not\subseteq \alpha_k(I)$. Then there is at least one element in $I$ that does not belong to $\alpha_k(I)$. Pick such an element, $q$, of lowest possible degree w.r.t. $x_1,x_2,\ldots,x_n$. Find an element $q'\in I$ such that $\deg_x(q')=\deg_x(q)$ and $L(q')=1$. Set $r\colonequals\alpha_k^2(q'\alpha_k^{-2}(L(q)))= \alpha_k(q'*\alpha_k^{-2}(L(q)))$. As in the left case, we get $r\in \alpha_k(I)$ and $q-r \in I$. By the minimality of $q$ we get $q-r \in \alpha_k(I)$, which gives the contradiction $q\in \alpha_k(I).$
\end{proof}

\begin{proposition}\label{prop_idealsOfA_n^k}
Any left (right) ideal of $A_n^k$ for $k_1k_2\cdots k_n\neq0$ is a left (right) ideal of $A_n$.
\end{proposition}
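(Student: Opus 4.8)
The plan is to deduce this directly from \autoref{lem_higheridealInvariants} together with the defining relation $p*q=\alpha_k(pq)$ between the twisted and associative products. I would treat the left case in full, the right case being entirely analogous with $q*p=\alpha_k(qp)$ in place of $p*q$.

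First I would observe that $A_n^k$ and $A_n$ share the same underlying $K$-vector space, since the Yau twist alters only the multiplication. Hence any left ideal $I$ of $A_n^k$ is in particular a $K$-subspace, so it is already closed under addition and $K$-scalar multiplication; all that remains is to verify closure under left multiplication by arbitrary elements of $A_n$ with respect to the \emph{associative} product. Next I would invoke \autoref{lem_higheridealInvariants}, which applies precisely because $k_1k_2\cdots k_n\neq0$, to obtain $\alpha_k(I)=I$. Since $\alpha_k$ is a $K$-automorphism of $A_n$, it is bijective, so applying $\alpha_k^{-1}$ to both sides upgrades this to $\alpha_k^{-1}(I)=I$ as well.

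Finally I would take arbitrary $p\in A_n$ and $q\in I$. Because $I$ is a left ideal of $A_n^k$, we have $p*q=\alpha_k(pq)\in I$. Applying $\alpha_k^{-1}$ and using $\alpha_k^{-1}(I)=I$ then yields $pq=\alpha_k^{-1}\!\left(\alpha_k(pq)\right)\in\alpha_k^{-1}(I)=I$. Thus $I$ is closed under left multiplication in $A_n$, and so it is a left ideal of $A_n$.

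I do not expect any genuine obstacle here: the substantive work has been front-loaded into \autoref{lem_higheridealInvariants}, whose conclusion $\alpha_k(I)=I$ (rather than merely the inclusion $\alpha_k(I)\subseteq I$ guaranteed by weak unitality) is exactly what allows the twist to be undone by $\alpha_k^{-1}$. The only point requiring a moment's care is that $\alpha_k(I)=I$ promotes to $\alpha_k^{-1}(I)=I$, which is immediate from the bijectivity of $\alpha_k$.
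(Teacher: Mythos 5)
Your proof is correct and follows essentially the same route as the paper: both reduce the claim to showing $A_nI\subseteq I$, use $p*q=\alpha_k(pq)$ to get $A_nI\subseteq\alpha_k^{-1}(I)$, and then invoke \autoref{lem_higheridealInvariants} to conclude $\alpha_k^{-1}(I)=I$. No gaps.
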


\begin{proof}
Let us prove the left case; the right case is similar. To this end, suppose $I$ is a left ideal of $A_n^k$. To show that $I$ is also a left ideal of $A_n$, it is enough to show that $A_nI \subseteq I$. Since $I$ is a left ideal of $A_n^k$, we know that $A_n*I\subseteq I$. Moreover, $A_n*I=\alpha_k(A_nI)$, so $A_n I \subseteq \alpha_k^{-1}(I)$. By \autoref{lem_higheridealInvariants}, $\alpha_k(I) = I$, so $\alpha_k^{-1}(I)=I$.
\end{proof}

\begin{corollary}\label{cor_simple}
$A_n^k$ is simple for $k_1k_2\cdots k_n\neq0$.    
\end{corollary}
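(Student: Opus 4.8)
The plan is to reduce the simplicity of $A_n^k$ to the classical simplicity of the associative Weyl algebra $A_n$, using \autoref{prop_idealsOfA_n^k} as the bridge. The key structural observation is that $A_n$ and its Yau twist $A_n^k$ share the same underlying $K$-vector space (indeed the same underlying set), so a subspace of one is a subspace of the other and only the multiplication differs. Consequently it suffices to compare the two-sided ideals of the two algebras as subsets.

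First I would take an arbitrary two-sided ideal $I$ of $A_n^k$ and aim to show that $I=\{0\}$ or $I=A_n^k$. Being two-sided, $I$ is simultaneously a left and a right ideal of $A_n^k$, so applying both the left and the right case of \autoref{prop_idealsOfA_n^k} (each of which uses the hypothesis $k_1k_2\cdots k_n\neq0$) shows that $I$ is both a left and a right ideal of $A_n$, that is, a two-sided ideal of $A_n$. I would then invoke the classical fact that the $n$th Weyl algebra $A_n$ over a field of characteristic zero is simple, so that its only two-sided ideals are $\{0\}$ and $A_n$. Therefore $I=\{0\}$ or $I=A_n$, and since $A_n=A_n^k$ as sets, this gives $I=\{0\}$ or $I=A_n^k$.

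To ensure that $A_n^k$ is genuinely simple rather than trivially so, I would also record that the product on $A_n^k$ is not identically zero: being the Yau twist of the unital, nonzero algebra $A_n$, it is weakly unital with weak identity element $1_{A_n}$, whence $A_n^k*A_n^k\neq\{0\}$.

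The argument is essentially a two-line corollary, so there is no real analytic obstacle; the only points requiring care are bookkeeping ones. I would make sure to apply \autoref{prop_idealsOfA_n^k} in both of its one-sided forms, and to be explicit about which notion of simplicity is in force for a weakly unital hom-associative algebra, namely the absence of nontrivial proper two-sided ideals. Here weak unitality already forces every ideal to be invariant under $\alpha_k$, so the distinction between ideals and hom-ideals does not intervene.
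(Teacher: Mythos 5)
Your proposal is correct and follows essentially the same route as the paper: pass from a two-sided ideal of $A_n^k$ to a two-sided ideal of $A_n$ via \autoref{prop_idealsOfA_n^k} and invoke the classical simplicity of $A_n$, using that the two algebras share the same underlying set. The extra remarks on non-triviality of the product and on hom-ideals are fine but not needed beyond what the paper already records.
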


\begin{proof}
Let $I$ be an ideal of $A_n^k$ for $k_1k_2\cdots k_n\neq0$. By \autoref{prop_idealsOfA_n^k}, $I$ is also an ideal of $A_n$. Since it is well known that $A_n$ is simple, $I$ must be trivial.        
\end{proof}

\begin{corollary}\label{cor_homStafford}
Any left (right) ideal of $A_n^k$ for $k_1k_2\cdots k_n\neq0$ is generated by two elements.
\end{corollary}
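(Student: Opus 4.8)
The plan is to reduce the statement to the classical Stafford's theorem~\cite{Sta78} for the associative Weyl algebra $A_n$, which becomes available to us precisely because \autoref{prop_idealsOfA_n^k} tells us that every one-sided ideal of $A_n^k$ is already a one-sided ideal of $A_n$. So let $I$ be a left ideal of $A_n^k$ with $k_1k_2\cdots k_n\neq0$ (the right case being symmetric). By \autoref{prop_idealsOfA_n^k}, $I$ is a left ideal of $A_n$, and Stafford's theorem then yields two elements $a,b\in I$ with $I=A_na+A_nb$ as a left ideal of $A_n$.

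Next I would show that these \emph{same} two elements generate $I$ as a left ideal of $A_n^k$. Let $J$ denote the left ideal of $A_n^k$ generated by $a$ and $b$. Since $a,b\in I$ and $I$ is a left ideal of $A_n^k$, we immediately get $J\subseteq I$. For the reverse inclusion I would take an arbitrary $w=pa+qb\in I$ with $p,q\in A_n$ and rewrite each associative product as a twisted product: using that $\alpha_k$ is an algebra automorphism of $A_n$, a short computation gives $pa=\alpha_k^{-1}(p)*\alpha_k^{-1}(a)$ and likewise $qb=\alpha_k^{-1}(q)*\alpha_k^{-1}(b)$, since $\alpha_k\bigl(\alpha_k^{-1}(p)\alpha_k^{-1}(a)\bigr)=pa$.

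The crucial point is then that $\alpha_k^{-1}(a)$ and $\alpha_k^{-1}(b)$ lie in $J$, and this is where \autoref{lem_higheridealInvariants} enters. Since $J$ is itself a left ideal of $A_n^k$ with $k_1k_2\cdots k_n\neq0$, we have $\alpha_k(J)=J$ and hence $\alpha_k^{-1}(J)=J$; as $a,b\in J$, this gives $\alpha_k^{-1}(a),\alpha_k^{-1}(b)\in J$. Because $J$ is a left ideal of $A_n^k$, the twisted products $\alpha_k^{-1}(p)*\alpha_k^{-1}(a)$ and $\alpha_k^{-1}(q)*\alpha_k^{-1}(b)$ then belong to $J$, so $w\in J$. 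This proves $I\subseteq J$, whence $I=J$ is generated by two elements.

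The main obstacle, and the reason the argument is not completely immediate, is that $A_n^k$ is non-associative, so the left ideal generated by $a$ and $b$ is a priori larger and more subtly structured than $A_n^k*a+A_n^k*b$, and one cannot simply match associative and twisted generators term by term. The invariance $\alpha_k(J)=J$ supplied by \autoref{lem_higheridealInvariants} is exactly what is needed to convert the associative products coming from Stafford's theorem back into twisted products landing inside $J$. The right-handed case proceeds symmetrically, starting from $I=aA_n+bA_n$ and using $aq=\alpha_k^{-1}(a)*\alpha_k^{-1}(q)$.
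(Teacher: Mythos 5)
Your proof is correct and follows essentially the same route as the paper: reduce to Stafford's theorem via \autoref{prop_idealsOfA_n^k}, rewrite associative products $pa$ as twisted products $\alpha_k^{-1}(p)*\alpha_k^{-1}(a)$, and invoke \autoref{lem_higheridealInvariants} to keep the relevant elements inside the ideal. The only (immaterial) difference is that the paper declares $\alpha_k^{-1}(a)$ and $\alpha_k^{-1}(b)$ to be the two generators of $I$ over $A_n^k$, whereas you keep $a$ and $b$ themselves and apply the $\alpha_k$-invariance to the hom-ideal $J$ they generate.
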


\begin{proof}
Let $I$ be a left ideal of $A_n^k$. Since $I$ is also a left ideal of $A_n$, we know that it is generated as an ideal of $A_n$ by two elements, say $p$ and $q$. We want to show that $\alpha^{-1}(p)$ and $\alpha^{-1}(q)$ generate $I$ as a left ideal of $A_n^k$. If $r\in I$, then there are $a,b \in A_n$ such that
$ r= ap+bq = \alpha(\alpha^{-1}(a)\alpha^{-1}(p)+\alpha^{-1}(b)\alpha^{-1}(q)) = \alpha^{-1}(a)*\alpha^{-1}(p)+\alpha^{-1}(b)*\alpha^{-1}(q)$. Clearly this shows that $\alpha^{-1}(p)$ and $\alpha^{-1}(q)$ generate $I$ as a left ideal of $A_n^k$. (That they are elements of $I$ follows from \autoref{lem_higheridealInvariants}.)

The right case is similar.  
\end{proof}

\begin{lemma}\label{lem_idealIsPrincipal}
Any left (right) ideal, $I$, of $A_n$ generated by elements $p_1, p_2, \ldots, p_m$ with $\deg_y(p_1)=\deg_y(p_2)=\cdots =\deg_y(p_m)=0$ is a principal left (right) ideal of $A_n^k$ if $k\neq0$.
\end{lemma}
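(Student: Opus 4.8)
The plan is to exhibit a single explicit generator built from the $p_i$ using the twist. Since $\deg_y(p_i)=0$ for every $i$, each $p_i$ lies in $K[x_1,x_2,\ldots,x_n]$ and is therefore fixed by $\alpha_k$; consequently $\alpha_k(ap_i)=\alpha_k(a)p_i\in I$, so $\alpha_k(I)\subseteq I$ and, by \autoref{prop_hom-ideals-from-higher-Weylalgebra}, $I$ is a left ideal of $A_n^k$. As $k\neq0$, I may relabel the variables so that $k_1\neq0$, and set
\[
    g\colonequals\sum_{i=1}^m y_1^{i-1}p_i\in I.
\]
Let $L$ be the left ideal of $A_n^k$ generated by $g$. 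Because $g\in I$ and $I$ is a left ideal of $A_n^k$, we have $L\subseteq I$ for free, so the whole content of the lemma is the reverse inclusion $I\subseteq L$.

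Next I would record two structural facts about $L$. First, $L$ is invariant under $\alpha_k$ (it is an ideal in the weakly unital algebra $A_n^k$, and directly $\alpha_k(g)=1_{A_n}*g\in L$), so $\alpha_k^j(g)\in L$ for every $j\geq0$. Second, for any $h$ one has the set identity $A_n^k*h=\{\alpha_k(ah):a\in A_n\}=A_n\alpha_k(h)$, since $\alpha_k$ is an automorphism of $A_n$. Combining these, for each $j\geq1$,
\[
    A_n\alpha_k^{j}(g)=A_n^k*\alpha_k^{j-1}(g)\subseteq L ,
\]
because $\alpha_k^{j-1}(g)\in L$ and $L$ is a left ideal. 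Moreover, since each $p_i$ is $\alpha_k$-fixed, the same identity gives $A_np_i=A_n^k*p_i$, so once I know $p_i\in L$ I get $A_np_i\subseteq L$ and hence $I=\sum_i A_np_i\subseteq L$. The task therefore reduces to showing $p_i\in L$ for each $i$.

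To achieve this I would exploit that $\alpha_k$ translates $y_1\mapsto y_1+k_1$ while fixing the $p_i$, so that
\[
    \alpha_k^{j}(g)=\sum_{i=1}^m (y_1+jk_1)^{i-1}p_i,\qquad j=1,2,\ldots,m .
\]
Reading this as a linear system $\mathbf v=M\mathbf p$ over the \emph{commutative} subring $K[y_1]\subseteq A_n$, with unknown vector $\mathbf p=(p_1,\ldots,p_m)^{\mathsf T}$ and $v_j=\alpha_k^{j}(g)$, the coefficient matrix $M=\bigl((y_1+jk_1)^{i-1}\bigr)_{j,i}$ is Vandermonde in the nodes $y_1+k_1,\ldots,y_1+mk_1$. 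Its determinant is $\prod_{1\le j<j'\le m}(j'-j)k_1=k_1^{\binom{m}{2}}\prod_{j<j'}(j'-j)$, a nonzero scalar since $\operatorname{char}K=0$ and $k_1\neq0$. Hence $M$ is invertible over $K[y_1]$ with $M^{-1}$ having entries in $K[y_1]$, and each $p_i=\sum_{j=1}^m (M^{-1})_{ij}\,\alpha_k^{j}(g)$ is a left $K[y_1]$-combination of the elements $\alpha_k^{j}(g)$, $j=1,\ldots,m$. Since every term $(M^{-1})_{ij}\,\alpha_k^{j}(g)$ lies in $A_n\alpha_k^{j}(g)\subseteq L$, we conclude $p_i\in L$, which finishes the proof. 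The right-handed case is entirely analogous, using $g=\sum_{i=1}^m p_i\,y_1^{i-1}$.

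I expect the main obstacle to be the design of $g$ together with the observation that the translation action of $\alpha_k$ on $y_1$ produces a Vandermonde system whose determinant is a \emph{scalar}, so the system can be inverted over $K[y_1]$ and the generators cleanly separated; this is where the non-triviality hypothesis $k\neq0$ and $\operatorname{char}K=0$ both enter. A secondary point requiring care is the bookkeeping of which twists are available inside $L$: only $\alpha_k^{j}(g)$ with $j\geq1$ are guaranteed to contribute their full $A_n$-multiples to $L$, which is precisely why one solves the system with the $m$ indices $j=1,\ldots,m$ rather than $j=0,\ldots,m-1$.
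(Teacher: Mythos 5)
Your argument is correct, and it uses the very same single generator as the paper, $g=\sum_{i=1}^m y_1^{i-1}p_i$, together with the same preliminary observations (the $p_i$ lie in $K[x_1,\ldots,x_n]$, hence are $\alpha_k$-fixed, so $I$ is a left ideal of $A_n^k$ containing $g$, giving $L\subseteq I$ for free). Where you genuinely diverge is in how the individual $p_i$ are recovered from $L$. The paper runs an induction on $m$ and, within the induction step, iterates the finite-difference operator $t\mapsto \alpha_k(t)-t$, which strictly lowers the $y_1$-degrees of the coefficients until only a nonzero scalar multiple of $p_{m+1}$ survives; one then peels off $p_{m+1}$ and appeals to the induction hypothesis. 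You instead take the $m$ elements $\alpha_k^{j}(g)=\sum_i(y_1+jk_1)^{i-1}p_i$, $j=1,\ldots,m$, and invert the resulting Vandermonde system over $K[y_1]$ in one stroke, using that the determinant $k_1^{\binom{m}{2}}\prod_{j<j'}(j'-j)$ is a nonzero \emph{scalar} precisely because $k_1\neq0$ and $\operatorname{char}K=0$. The two mechanisms are two ways of inverting the same translation action; yours is non-inductive, makes the role of both hypotheses completely transparent, and produces explicit $K[y_1]$-coefficients, while the paper's differencing argument is more elementary and avoids any determinant computation. Your bookkeeping of which $A_n$-multiples actually land in $L$ — namely $A_n\alpha_k^{j}(g)=A_n^k*\alpha_k^{j-1}(g)\subseteq L$ for $j\geq1$, which is why you solve the system at the nodes $j=1,\ldots,m$ rather than $j=0,\ldots,m-1$ — is careful and correct, and is a point the paper handles only implicitly through its explicit $*$-manipulations.
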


\begin{proof} Assume, without loss of generality, that $k_1\neq 0$. We first show that $I$ is a left ideal of $A_n^k$. If $q\in I$, then we can write $ q= \sum_{i=1}^{m} r_i p_i $ for some $r_i \in A_n$ and $\alpha_k(q) =\sum_{i=1}^{m} \alpha_k(r_i) p_i \in I $. Hence, by \autoref{prop_hom-ideals-from-higher-Weylalgebra}, $I$ is a left ideal of $A_n^k$. It remains to show it is a principal left ideal of $A_n^k$. 

We will proceed by induction, and in fact we will show that  $I$ of $A_n$ is generated as a left ideal of $A_n^k$ by $t=p_1+y_1p_2+\cdots +y_1^{m-1}p_{m}$. To handle the case $m=2$ set $t\colonequals p_1+y_1p_2$. Let $J$ be the left ideal of $A_n^k$ generated by $t$. Then $\alpha_k(t)=p_1+y_1p_2+k_1p_2\in J$, so $p_2=k_1^{-1}*(\alpha_k(t)-t)\in J$, and hence $p_1=t-(y_1-k_1)*p_2\in J$. Hence $I=J$. 

Now assume we have proven the result for $m$ and wish to prove it for $m+1$. Set 
$ t\colonequals p_1+y_1p_2+\cdots +y_1^{m}p_{m+1}$. Let $J$ be the left ideal of $A_n^k$ generated by $t$. Obviously $J\subseteq I$. 
Note that 
$t_1\colonequals\alpha_k(t)-t = r_{1,2}p_2+r_{1,3}p_3+ \cdots +r_{1,m+1}p_{m+1}$,
where $r_{1,i} \in K[y_1]$ and $\deg_{y_1}(r_{1,i})= i-2$ for all $i\in \{ 2,3, \ldots, m+1\}.$ Also note that $t_1 \in J$. We can then set $t_2\colonequals \alpha_k(t_1)-t_1$ and note that
$ t_2 =r_{2,3}p_3+ \cdots +r_{2,m+1}p_{m+1}$
where $r_{2,i} \in K[y_1]$ and $\deg_{y_1}(r_{2,i})= i-3$ for all $i\in \{3,4, \ldots, m+1\}.$ Also $t_2\in J$. Proceeding in a similar way, we get an element $t_m \in J$ such that $t_m = r_{m,m+1}p_{m+1}$, where $r_{m,m+1}\in K$ and $r_{m,m+1}\neq 0$. Thus $p_{m+1} \in J$ and $p_1+y_1p_2+\cdots +y_1^{m-1}p_{m} \in J$, so by the induction assumption, $J=I$. 

The right case is similar; one sets $t\colonequals p_1+p_2y_1+\cdots p_my_1^{m-1} $ instead. 
\end{proof}

\begin{lemma}\label{lem_ConstantGenerators}
For any $p \in A_n$, there are $q_1, q_2, \ldots, q_{m} \in A_n$ with $\deg_y(q_1)=\deg_y(q_2)$ $=\cdots =\deg_y(q_m)=0$, such that the left (right) ideal of $A_n^k$ for $k_1k_2\cdots k_n\neq0$ generated by $p$ equals the left (right) ideal of $A_n^k$ generated by $q_1, q_2,\ldots, q_m$.   
\end{lemma}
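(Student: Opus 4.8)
The plan is to reduce the statement to a question about the degree-zero part of the ideal it generates. Write $K[x]\colonequals K[x_1,x_2,\ldots,x_n]$ for the subalgebra of elements $q$ with $\deg_y(q)=0$. Combining \autoref{prop_hom-ideals-from-higher-Weylalgebra} and \autoref{prop_idealsOfA_n^k}, the left $A_n^k$-ideals are exactly the $\alpha_k$-invariant left $A_n$-ideals, so the left ideal $I$ of $A_n^k$ generated by $p$ is the smallest $\alpha_k$-invariant left ideal of $A_n$ containing $p$; moreover, since $\alpha_k$ fixes $K[x]$ pointwise, for any $S\subseteq K[x]$ the left $A_n$-ideal $A_nS$ is automatically $\alpha_k$-invariant and hence equals the left $A_n^k$-ideal generated by $S$. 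Now any admissible generators $q_1,\ldots,q_m$ must satisfy both $q_i\in I$ and $q_i\in K[x]$, so they lie in $I_0\colonequals I\cap K[x]$. The strategy is therefore to show $I=A_nI_0$ and then to take $q_1,\ldots,q_m$ to be a finite set of generators of the ideal $I_0$ of the Noetherian ring $K[x]$ (these exist by the Hilbert basis theorem). The right case would be handled symmetrically.

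The inclusion $A_nI_0\subseteq I$ is immediate, so the content is the reverse inclusion, which I would prove by induction on $\deg_y$: every $q\in I$ lies in $A_nI_0$. The case $\deg_y(q)=0$ is trivial. For the inductive step the natural tool is the difference operator $\Delta\colonequals\alpha_k-\id$, which preserves $I$ because $I$ is $\alpha_k$-invariant (\autoref{lem_higheridealInvariants}). Writing $q=\sum_{\beta}P_\beta(x)y^\beta$ with $P_\beta\in K[x]$ and $d=\deg_y(q)$, and using that $\alpha_k$ fixes the $x_i$ and sends $y_i\mapsto y_i+k_i$, one computes exactly as in \autoref{lem_higheridealInvariants} and \autoref{lem_idealIsPrincipal} that the top homogeneous $y$-part of $\Delta q$ is obtained from the top part $\sum_{|\beta|=d}P_\beta(x)y^\beta$ of $q$ by applying the directional operator $\sum_{i=1}^{n}k_i\partial_{y_i}$. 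When this does not vanish we have $\deg_y(\Delta q)=d-1$, and one would finish by induction together with a bookkeeping of the peeled-off coefficients, just as in the proof of \autoref{lem_idealIsPrincipal}.

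The step I expect to be the main obstacle, and where the argument is genuinely delicate once $n\geq 2$, is the control of the $\alpha_k$-invariant directions. For $n=1$ the map $\alpha_k$ shifts the single variable $y$ by $k\neq 0$, so no polynomial in $y$ of positive degree is $\alpha_k$-invariant, $\Delta$ always lowers $\deg_y$ by exactly one, and the induction runs cleanly, recovering the first hom-associative Weyl algebra. For $n\geq 2$, however, there are elements of positive $y$-degree fixed by $\alpha_k$; for instance $\alpha_k(k_2y_1-k_1y_2)=k_2y_1-k_1y_2$ because $\sum_i k_i\partial_{y_i}(k_2y_1-k_1y_2)=k_1k_2-k_2k_1=0$. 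On such a top part $\Delta$ collapses the leading term and carries no information about the invariant direction, so $\Delta$ alone cannot produce the required degree-zero data. One is then forced to supplement $\Delta$ with left multiplication by the $x_i$, which acts as $\partial_{y_i}$ through $x_iy_i=y_ix_i+1$; the crux is to verify that after combining these operations $I_0$ is still large enough to regenerate $I$. This is the technical heart of the proof, and it is exactly here that a leading-symbol (order-filtration) analysis of $I_0$ is indispensable: such an analysis must be carried out carefully, since for an $\alpha_k$-fixed generator like $p=k_2y_1-k_1y_2$ the symbol of any $wp$ is $\sigma(w)\,\sigma(p)$, which is divisible by the nonconstant $y$-form $k_2y_1-k_1y_2$, so $I_0$ can be very small. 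Making this invariant case work is the part of the proof I would focus the most effort on.
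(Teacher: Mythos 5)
Your reduction to the degree‐zero part $I_0=I\cap K[x_1,\ldots,x_n]$ and the descent via $\Delta=\alpha_k-\id$ is, in its generic part, the same mechanism the paper uses: the paper writes $p=\sum_a p_ax_1^{a_1}\cdots x_n^{a_n}$ with $p_a\in K[y_1,\ldots,y_n]$, inducts on the number of nonzero $p_a$, and repeatedly replaces $p$ by $p-\alpha_k(p)$ to drive $\deg_y$ to zero. The difference lies in the case you single out at the end, and there your diagnosis is the correct one: the obstacle you flag is not a delicate step awaiting a clever argument, it is a genuine counterexample, and your own symbol observation, pushed one line further, proves it. Take $n=2$ and $p=(k_2y_1-k_1y_2)x_1$. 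Since $\alpha_k(p)=p$, the left ideal $I$ of $A_2^k$ generated by $p$ is exactly $A_2p$ (indeed $a*p=\alpha_k(a)p$, and $A_2p$ is already $\alpha_k$-invariant and closed under $*$). Filtering $A_2$ by total degree in $y_1,y_2$, the associated graded ring is a commutative polynomial ring, hence a domain, so every nonzero $wp$ has total $y$-degree equal to that of $w$ plus one, which is at least one. Hence $I\cap K[x_1,x_2]=\{0\}$: the ideal contains no nonzero element with $\deg_y=0$, so no generating family $q_1,\ldots,q_m$ as in the statement can exist, for any $k$ with $k_1k_2\neq0$. Supplementing $\Delta$ with left multiplication by the $x_i$, as you propose, cannot repair this: all such operations stay inside $I$, and the filtration argument bounds the $y$-degree from below uniformly on $I\setminus\{0\}$.

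So the honest verdict is that your proposal is incomplete exactly where you say it is, but no completion exists; and the paper's own proof has the same hole. In the base case the paper asserts that $p'=p-\alpha_k(p)$ satisfies $E(p')=E(p)$ and $\deg_y(p')<\deg_y(p)$, and in the inductive step that $p'\neq0$; both assertions fail whenever the coefficients $p_a$ are $\alpha_k$-invariant, which for $n\geq2$ happens for nonconstant $p_a$ such as your $k_2y_1-k_1y_2$. For $n=1$ the only $\alpha_k$-invariant polynomials in $K[y]$ are the constants (characteristic zero, $k\neq0$), so there both your sketch and the paper's argument are sound. For $n\geq2$ the lemma must be weakened or rerouted; note that the conclusion of \autoref{thm:principal} is not immediately lost (in the example above $I$ is principal, generated by $p$ itself), but its derivation via \autoref{lem_ConstantGenerators} and \autoref{lem_idealIsPrincipal} is.
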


\begin{proof}
  Let $I$ be the left ideal of $A_n^k$ generated by $p$. Set $p=\sum_{a \in \mathbb{N}^n} p_a x_1^{a_1}x_2^{a_2}\ldots x_n^{a_n}$, where each $p_a \in K[y_1, y_2,\ldots, y_n]$, and let $E(p) = \{ a \in \mathbb{N}^n \, | \, p_a \neq 0 \}$. We prove the lemma by induction over $|E(p)|$. 

  We begin with the case when $|E(p)|=1$. If $\deg_y(p)=0$, we are done. Otherwise, set $p' = p-\alpha_k(p).$ Then $E(p')= E(p)$ and $\deg_y(p')<\deg_y(p)$. Repeat as necessary until we find $q_1 \in I$ such that $E(q_1)=E(p)$ and $\deg_y(q_1)=0$. Then there is $r \in K[y_1, y_2,\ldots , y_n]$ such that $p=rq_1, $ so $p$ is in the left ideal of $A_n$ generated by $p$, and thus also in the left ideal of $A_n^k$ generated by $q_1$. Hence the left ideal of $A_n^k$ generated by $q_1$ is $I$. 

  Now suppose we have proven the lemma when $|E(p)|\leq \ell$. Assume $|E(p)|=\ell +1$. If $\deg_y(p)=0$, we are done, so let $\deg_y(p)>0$.  Set $p' = p-\alpha_k(p).$ Note that $E(p') \subseteq E(p),$ $p'\neq 0$, and that if $E(p')=E(p)$, then $\deg_y(p')<\deg_y(p).$ Repeat as necessary until we find $q_1 \in I$ such that $E(q_1) \subseteq E(p), q_1 \neq 0$, and $\deg_y(q_1)= 0$. We can then find $r\in K[y_1, y_2,\ldots , y_n]$ such that $E(p-rq_1) \subsetneq E(p)$. By the induction assumption there are $q_2, q_3,\ldots q_m$ with $\deg_y(q_2)=\deg_y(q_3)=\cdots = \deg_y(q_m)=0$ that generate the same left ideal of $A_n^k$ as $p-rq_1$. Then $q_1, q_2, \ldots, q_m$ are elements that generate $I$ as a left ideal of $A_n^k$ and $\deg_y(q_1)=\deg_y(q_2)=\cdots=\deg_y(q_m)=0$. 

The right case is similar. 
\end{proof}
    
\begin{theorem}\label{thm:principal}
Any left (right) ideal of $A_n^k$ for $k_1k_2\cdots k_n\neq0$ is principal.  
\end{theorem}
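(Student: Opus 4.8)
The plan is to assemble the preceding lemmas and corollaries rather than to establish anything new from scratch, since the substantive work has already been done. Let $I$ be a left ideal of $A_n^k$, the right case being symmetric. First I would invoke \autoref{cor_homStafford} to write $I$ as the left ideal of $A_n^k$ generated by just two elements, say $p$ and $q$. This reduces the whole problem to understanding an ideal presented by a small, explicit generating set.

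Next I would apply \autoref{lem_ConstantGenerators} separately to $p$ and to $q$. This replaces the single generator $p$ by finitely many elements $q_1,\ldots,q_s$ with $\deg_y(q_i)=0$ generating the same left ideal of $A_n^k$ as $p$, and likewise replaces $q$ by elements $q_{s+1},\ldots,q_m$ with $\deg_y=0$. Concatenating, $I$ is generated as a left ideal of $A_n^k$ by the finite list $q_1,\ldots,q_m$, all of degree zero in the $y$-variables. The purpose of this step is to put the generators into precisely the form demanded by \autoref{lem_idealIsPrincipal}.

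The one genuinely non-formal step is to reconcile the two meanings of ``the ideal generated by the $q_i$''. Let $I'$ be the left ideal of $A_n$ generated by $q_1,\ldots,q_m$. I would prove $I=I'$ by a two-sided containment: on one hand, \autoref{prop_idealsOfA_n^k} shows that $I$ is itself a left ideal of $A_n$ containing every $q_i$, whence $I'\subseteq I$; on the other hand, \autoref{lem_idealIsPrincipal} shows that $I'$ is a left ideal of $A_n^k$ containing every $q_i$, whence $I\subseteq I'$. Therefore $I=I'$. This is the step I expect to require the most care, precisely because one must keep track of which multiplication (that of $A_n$ or that of $A_n^k$) is in force when ``generated by'' is invoked; the equality rests on both \autoref{prop_idealsOfA_n^k} and the $\alpha_k$-invariance established inside the proof of \autoref{lem_idealIsPrincipal}.

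Finally, \autoref{lem_idealIsPrincipal} also asserts that $I'$ is a principal left ideal of $A_n^k$, so $I=I'$ is principal, which finishes the left case. The right case follows by the same argument, using the right-handed versions of each cited result.
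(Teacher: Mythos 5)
Your proposal is correct and follows essentially the same route as the paper: reduce to two generators via Stafford's theorem, replace each generator by finitely many elements of $y$-degree zero using \autoref{lem_ConstantGenerators}, and conclude with \autoref{lem_idealIsPrincipal}. The explicit two-sided containment you give to reconcile generation over $A_n$ with generation over $A_n^k$ is a point the paper passes over with a ``clearly,'' so that extra care is welcome but does not constitute a different method.
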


\begin{proof}
Let $I$ be a left ideal of $A_n^k$. We know it is generated by elements $p,q$ as a left ideal of $A_n$. By \autoref{lem_ConstantGenerators}, we can find $p_1, p_2, \ldots, p_{\ell}$ and $q_1, q_2, \ldots, q_m$ such that the $p_i$ generate the same left ideal of $A_n^k$ as $p$, the $q_i$ generate the same left ideal of $A_n^k$ as $q$, and $\deg_y(p_1)=\deg_y(p_2)=\cdots =\deg_y(p_{\ell})=\deg_y(q_1)=\deg_y(q_2)= \cdots = \deg_y(q_m) =0$. Clearly $p_1,p_2,\ldots , p_{\ell}, q_1,q_2, \ldots, q_m$ generate $I$ as a left ideal of $A_n^k$. By \autoref{lem_idealIsPrincipal} we are done.

The right case is similar.     
\end{proof}

\end{document}